\newtheorem{thm}{Theorem}[section]
\newtheorem{lm}[thm]{Lemma}
\theoremstyle{definition}
\newtheorem*{df*}{Definition}
\theoremstyle{remark}
\newtheorem*{rem*}{Remark}
\numberwithin{equation}{section}
\newcommand{\ci}[1]{_{ {}_{\scriptstyle #1}}}
\newcommand{\al}{\alpha}
\newcommand{\cz}{Calder\'{o}n--Zygmund\ }
\newcommand{\E}{\mathbb{E}}
\newcommand{\R}{\mathbb{R}}
\newcommand{\QQ}{[w]_{A_2}}
\newcommand{\wt}{\widetilde}
\newcommand{\La}{\langle}
\newcommand{\Ra}{\rangle}
\newcommand{\cD}{\mathscr{D}}
\newcommand{\cP}{\mathcal{P}}
\newcommand{\cE}{\mathcal{E}}
\def\cyr{\fontencoding{OT2}\fontfamily{wncyr}\selectfont}
\DeclareTextFontCommand{\textcyr}{\cyr}
\newcommand{\sha}[0]{\ensuremath{\mathbb{S}
}}
\newcounter{vremennyj}
\begin{document}

\title[Simple estimate of shifts]{A simple sharp weighted estimate of the dyadic shifts on metric spaces with geometric doubling}
\author{Fedor Nazarov}
\address{Dept. of Mathematics, Univ. of Wisconsin, Madison}
\author{Alexander Volberg}
\address{Department of Mathematics, Michigan State University, East
Lansing, MI 48824, USA}

\subjclass{30E20, 47B37, 47B40, 30D55.} 
\keywords{Key words: dyadic shifts, \cz operators, $A_2$ weights, $A_1$ weights, Carleson embedding theorem, stopping time.}
\date{}

\begin{abstract}
We give a short and simple polynomial estimate of the norm of weighted dyadic shift on metric space with geometric doubling, which is linear in the norm of the weight. Combined with the existence of special  probability space of dyadic lattices built in A. Reznikov, A. Volberg, `` Random ``dyadic'' lattice in geometrically doubling metric space and $A_2$ conjecture", arXiv:1103.5246, and with decomposition of \cz operators to dyadic shifts from 
Hytonen's
``The sharp weighted bound for general Calderon-Zygmund operators", arXiv:1007.4330 (and later T. Hyt\"onen, C. P\'erez, S. Treil, A. Volberg, ``A sharp estimated of weighted dyadic shifts that gives the proof of $A_2$ conjecture",  arXiv 1010.0755.), we will be able to have a linear (in the norm of weight) estimate of an arbitrary \cz operator on a metric space with geometric doubling. This will be published separately.
\end{abstract}

\maketitle

\section{Introduction}
\label{Intro}

Recall that in \cite{PTV1} it was proved that
\begin{thm}
\label{weak}
If $T$ is an arbitrary operator with a \cz kernel, then
\begin{align*}
\|T\|_{L^2(w\,d\mu)\rightarrow L^{2,\infty}(w)}+ & \|T'\|_{L^2(w^{-1})\rightarrow L^{2,\infty}(w^{-1})}
 \le 2\|T\|_{L^2(w\,d\mu)\rightarrow L^2(w\,d\mu)} 
\\
& \le C\,(\QQ+ \|T\|_{L^2(w\,d\mu)\rightarrow L^{2,\infty}(w)}+ \|T'\|_{L^2(w^{-1})\rightarrow L^{2,\infty}(w^{-1})}).
\end{align*}
\end{thm}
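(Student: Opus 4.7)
The first inequality is essentially tautological. The embedding $L^2(w\,d\mu)\hookrightarrow L^{2,\infty}(w)$ is contractive, so $\|T\|_{L^2(w)\to L^{2,\infty}(w)}\le\|T\|_{L^2(w)\to L^2(w)}$ with no effort. For the adjoint side, the isometry $h\mapsto hw^{-1}$ identifies $L^2(w^{-1}d\mu)$ with the dual of $L^2(wd\mu)$ under the unweighted pairing, giving $\|T'\|_{L^2(w^{-1})\to L^2(w^{-1})}=\|T\|_{L^2(w)\to L^2(w)}$; a second application of the same embedding bounds the adjoint weak-type norm. Summing yields the factor $2$.

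The second inequality is the work. Write $N=\|T\|_{L^2(w)\to L^{2,\infty}(w)}$ and $N'=\|T'\|_{L^2(w^{-1})\to L^{2,\infty}(w^{-1})}$. Fix $f,g$ of unit norm in $L^2(w\,d\mu)$ and estimate $|\langle Tf,g\rangle_{w\,d\mu}|$. I would run simultaneous Calder\'on--Zygmund (``corona'') decompositions of $f$ and $g$ with respect to the measure $w\,d\mu$, producing two sparse families of principal cubes $\{Q_j\}$, $\{R_k\}$ and splitting each function as a bounded good part plus oscillatory bad atoms supported on the stopping cubes. Expand
\[
\langle Tf,g\rangle_{w\,d\mu}=\sum_{j,k}\langle Tf_j,g_k\rangle_{w\,d\mu}
\]
and split according to whether $Q_j,R_k$ are close and of comparable size (diagonal) or not (off-diagonal). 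On the diagonal, replace $f_j,g_k$ by their weighted averages and use the pointwise identity $\langle w\rangle_Q\langle w^{-1}\rangle_Q\le\QQ$ together with a Carleson embedding on the sparse family; this delivers the $\QQ$ contribution. Off-diagonal, one of the two pieces is essentially bounded on the larger cube, so the pairing reduces to an integral of $Tb$ (respectively $T'b$) against a bounded function; a layer-cake argument then converts the weak-type hypothesis into the needed $L^1$ bound and yields the $CN$ and $CN'$ terms.

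The main obstacle is keeping all constants linear in $\QQ$ and in $N,N'$. A naive Cauchy--Schwarz on the diagonal sum would produce an unwanted $\QQ^{1/2}$ factor, and the off-diagonal distributional argument can overcount stopping cubes. Both issues are defeated by exploiting the sparsity / Carleson property of the principal families together with the two-weight testing interpretation of the $A_2$ condition; organising the decomposition so that these structural features remain genuinely visible through every estimate is where the real effort goes.
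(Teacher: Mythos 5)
The paper does not prove Theorem~\ref{weak}; it is \emph{recalled} from \cite{PTV1}, and the introduction explains the route taken there: Theorem~\ref{weak} is deduced as a corollary of the weighted $T1$ theorem (the Main Theorem of \cite{PTV1}), which in turn rests on the two-weight $T1$ theorem of \cite{NTVlost}. So there is no in-paper argument to compare against; what can be assessed is whether your sketch is sound and how it relates to the cited route.

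Your treatment of the easy direction is fine: the contractive embedding $L^2(w\,d\mu)\hookrightarrow L^{2,\infty}(w)$ and the duality identity $\|T'\|_{L^2(w^{-1})\to L^2(w^{-1})}=\|T\|_{L^2(w)\to L^2(w)}$ (via the unweighted pairing) give the first inequality with the factor $2$ immediately.

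For the substantive inequality you propose to bypass the $T1$-theorem machinery entirely and attack $\langle Tf,g\rangle_{w\,d\mu}$ directly via simultaneous corona decompositions of $f$ and $g$ in $L^2(w\,d\mu)$. This is a genuinely different organisation of the argument from the one the paper points to, and in spirit it is close to what \cite{NTVlost} actually does at the two-weight level. However, as written it is a roadmap rather than a proof, and the places you flag as ``the real effort'' are exactly the places where the argument could collapse: you never say how the sparse/Carleson structure of the stopping families actually converts the diagonal sum into a linear (not $\QQ^{1/2}$) bound; you do not explain how the weak-type hypothesis enters the off-diagonal pairing once $T$ is applied to a non-indicator ``bounded good part'' (the weak-type norm only directly controls distribution functions of $Tf$, and passing from that to the bilinear pairing against an oscillatory piece is where tail estimates and the CZ kernel regularity are indispensable — you never invoke the kernel bounds at all); and you do not address convergence or the a priori boundedness needed to make sense of $\langle Tf,g\rangle$. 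None of this shows the approach is wrong, but there is a real gap between the sketch and a proof, and the paper's own route (reduce to testing on indicators via a weighted $T1$ theorem, then observe that the weak norm dominates the testing constants) cleanly isolates these difficulties in a way your direct decomposition does not yet.
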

By $T'$ we denote the adjoint operator. Here of course only the right inequality is interesting. And it is unexpected too. The weak and strong norm of any operator with a \cz kernel turned out to be equivalent up to additive term $\QQ$.  In its turn, Theorem \ref{weak} was obtained in \cite{PTV1} as a corollary of a weighted $T1$ theorem--the Main Theorem of \cite{PTV1}. Again in its turn the Main Theorem (={\it weighted} $T1$ {\it theorem}) in \cite{PTV1} is a  consequence of a rather difficult two-weight $T1$ theorem of \cite{NTVlost}. From Theorem \ref{weak} we obtained in \cite{PTV1} the following result which holds for any \cz operator.
\begin{thm}
\label{A2log}
$\|T\|_{L^2(w\,d\mu)\rightarrow L^2(w\,d\mu)} \le C\cdot \QQ\log (1+\QQ)$.
\end{thm}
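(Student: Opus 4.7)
The plan is to apply Theorem~\ref{weak}, which reduces the strong bound on $\|T\|_{L^2(w\,d\mu)\to L^2(w\,d\mu)}$ to estimating the two weak-type norms $\|T\|_{L^2(w\,d\mu)\to L^{2,\infty}(w)}$ and $\|T'\|_{L^2(w^{-1})\to L^{2,\infty}(w^{-1})}$, since the additive term $[w]_{A_2}$ already satisfies the desired bound. Using the symmetry $[w]_{A_2}=[w^{-1}]_{A_2}$ together with the fact that $T'$ is itself of \cz type, it suffices to prove
\[
\|T\|_{L^2(w\,d\mu)\to L^{2,\infty}(w)}\le C\,[w]_{A_2}\log(1+[w]_{A_2}),
\]
with the bound for $T'$ then following by the same argument applied to the pair $(T',w^{-1})$.

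For this weak-type estimate I would go through $A_1$ weights. First I would establish the classical linear weak-$(1,1)$ bound
\[
\|T\|_{L^1(v\,d\mu)\to L^{1,\infty}(v\,d\mu)}\le C\,[v]_{A_1}\qquad (v\in A_1),
\]
via the weighted Calder\'on--Zygmund decomposition of $f$ at height $\lambda$: the good part is handled by an unweighted $L^2$-estimate combined with Chebyshev, while for the bad part $b=\sum_j b_j$ the mean-zero cancellation together with H\"ormander smoothness of the kernel controls the exterior contribution, and the pointwise $A_1$ property $Mv\le[v]_{A_1}\,v$ linearly bounds the $v$-measure of the enlarged union of Whitney cubes.

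To bridge from this $A_1$ weak-$(1,1)$ bound to the desired $A_2$ weak-$(2,2)$ bound, I would invoke the Rubio de Francia algorithm to produce, for each $f\in L^2(w\,d\mu)$, an $A_1$-majorant $v\ge|f|$ satisfying
\[
\|v\|_{L^2(w\,d\mu)}\le 2\|f\|_{L^2(w\,d\mu)}\quad\text{and}\quad [v]_{A_1}\le 2\|M\|_{L^2(w\,d\mu)\to L^2(w\,d\mu)}\le C\,[w]_{A_2},
\]
the last inequality being Buckley's sharp estimate for the maximal function on $L^2(w\,d\mu)$. Combining this with the weak-$(1,1)$ bound and splitting the level set $\{|Tf|>\lambda\}$ by a Marcinkiewicz-type argument at a threshold calibrated to $[w]_{A_2}$ gives the desired weak-$(2,2)$ estimate, with the logarithm arising from balancing the two sides of the splitting.

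The main obstacle is exactly this bridging step: a naive combination of Rubio de Francia with the $A_1$ weak-$(1,1)$ bound yields only a polynomial dependence on $[w]_{A_2}$, and shaving this down to a single logarithmic loss requires the sharp Buckley estimate for $M$ together with a careful optimization of the Marcinkiewicz threshold. Once this is in place, substitution into Theorem~\ref{weak} immediately produces the claimed bound $C\,[w]_{A_2}\log(1+[w]_{A_2})$.
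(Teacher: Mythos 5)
Your use of Theorem~\ref{weak} to reduce the strong bound to the two weak-type norms is the same organizing step as in \cite{PTV1}, and the self-duality observation $[w]_{A_2}=[w^{-1}]_{A_2}$ is fine. The trouble is in the route you then take, which rests on a false ingredient. The linear $A_1$ weak-$(1,1)$ estimate
\begin{equation*}
\|T\|_{L^1(v\,d\mu)\to L^{1,\infty}(v\,d\mu)}\le C\,[v]_{A_1}\qquad(v\in A_1)
\end{equation*}
is precisely the $A_1$ form of the Muckenhoupt--Wheeden conjecture, and it is not a theorem --- it has been disproved (Reguera for dyadic models, Reguera--Thiele for the Hilbert transform), and a quantitative lower bound shows the dependence on $[v]_{A_1}$ must be strictly superlinear. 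The best known and essentially sharp bound is $\|T\|_{L^1(v\,d\mu)\to L^{1,\infty}(v\,d\mu)}\le C\,[v]_{A_1}\log(1+[v]_{A_1})$, due to Lerner--Ombrosi--P\'erez, and the naive \cz decomposition you sketch does not deliver even that without substantial additional work: for the good part an \emph{unweighted} $L^2$ estimate plus Chebyshev gives Lebesgue measure of the level set, not $v$-measure, and for the bad part the step where you invoke $Mv\le[v]_{A_1}v$ to control the exterior contribution is exactly where the logarithm is unavoidable. In other words the architecture of your argument is inverted: the logarithm in Theorem~\ref{A2log} is already present at the $A_1$ weak-$(1,1)$ level and is merely transported to $A_2$; it is not generated by a Marcinkiewicz-type balancing.

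There is a second gap even if one substitutes the correct logarithmic $A_1$ bound. The Rubio de Francia construction gives a majorant $v=Rf\ge|f|$ with $[v]_{A_1}\lesssim[w]_{A_2}$, but the $A_1$ weak-$(1,1)$ estimate then controls $v\{|Tf|>\lambda\}$, whereas the quantity you must bound is $w\{|Tf|>\lambda\}$, and your proposal never compares these two measures. The known passages from the $A_1$ bounds to the $A_2$ weak-$(2,2)$ bound proceed instead through the mixed $A_1$/$A_\infty$ strong estimates of Lerner--Ombrosi--P\'erez, or through the factorization $w=u_1u_2^{-1}$ with $u_1,u_2\in A_1$; neither is a direct majorization of $f$. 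Finally, note that the present paper does not contain a proof of Theorem~\ref{A2log}: it is quoted from \cite{PTV1}, where it is obtained from Theorem~\ref{weak} together with the weighted $T1$ theorem of that paper, not by the $A_1$-majorant scheme you describe.
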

By $A_2$ conjecture people understand the strengthening of this claim, where the logarithmic term is deleted, in other words, a linear  (in weight's norm) estimate of  arbitrary weighted \cz operator.
In \cite{HLRSVUT} the $A_2$ conjecture was proved for \cz operators having more than $2d$ smoothness in $\mathbb{R}^d$. 

A bit later a preprint \cite{H} of Tuomas Hyt\"onen has appeared, the $A_2$ conjecture is fully proved there.  It is based on the Main Theorem (={\it weighted} $T1$ {\it theorem}) in \cite{PTV1} of P\'erez--Treil--Volberg.  Both \cite{PTV1} and \cite {H} are neither short nor easy. 

Notice that the scheme of the proof in \cite{H}  goes like that: given a \cz operator $T$ and a $w\in A_2$ weight, one first uses the Main Theorem (={\it weighted} $T1$ {\it theorem}) of \cite{PTV1}, which says that to prove a linear estimate for $\|Tf\|_{w}$ it is enough to prove it uniformly only for special ``characteristic functions of cubes" type functions $f$ (see the true statement in \cite{PTV1}). Notice that all the cubes must be checked. The second (very beautiful) part of the proof is to decompose $T$ into ``a convex combinations" of dyadic shifts, the new idea is used here that grew out of random lattices approach in non-homogeneous Harmonic Analysis theory of Nazarov--Treil--Volberg, see \cite{NTV5}, \cite{NTV7}, \cite{NTVlost}, \cite{Vo1}. Now it is enough to check the right estimate for each dyadic shift applied to each ``characteristic function of cube". A very annoying difficulty appears: the shift is with respect to a certain dyadic lattice, but the cube in question is arbitrary and a priori does not belong to this lattice. This creates serious technical difficulties, which can be (and were) avoided in \cite{HPTV}. 

 The direct proof of $A_2$ conjecture (without going through \cite{PTV1}) was given in \cite{HPTV}, and it was based on two ingredients: 1) a formula for decomposing an arbitrary \cz operators into (generalized) dyadic shifts by the averaging trick, see \cite{H}, 2) on a polynomial in complexity and linear in weight estimate of the norm of a dyadic shift. 
 
 The latter was quite complicated and was based on modification of the argument in Lacey--Petermichl-Reguera \cite{LPR} and on the use of \cite{NTV6} with its careful reexamination. The former--as we already mentioned-- was rooted in the works on non-homogeneous Harmonic Analysis, like e. g. \cite{NTV5}--\cite{NTV7}, \cite{NTVlost},  \cite{Vo1}, but with  a new twist, which appeared first in Hyt\"onen's \cite{H} and was simplified in  Hyt\"onen--P\'erez--Treil--Volberg's \cite{HPTV}. 
 
 The averaging trick was a development of the bootstrapping argument used by Nazarov--Treil--Volberg, where they exploited the fact that the bad part of a function can be made arbitrarily small. Using the original Nazarov--Treil--Volberg averaging trick would add an extra factor depending on $[w]\ci{A_2}$ to the estimate, so a new idea was necessary. The new observation in \cite{H} was that as soon as the probability of a ``bad'' cube is less than $1$, it is possible to completely ignore the bad cubes (at least in the situation where they cause troubles).

\section{Shifts of complexity $m,n$}
\label{shifts}

Let $X$ be a compact geometrically doubling metric space. Let $\mu$ be a doubling measure on $X$, which exists by \cite{KV}.  Let $\cD$ be a finite ``dyadic" lattice on $X$. Namely, $\cD$ consists of disjoint partition $\cE_N$ to pieces $Q_N^j$ of size $\asymp \delta^N$, then there is a partition $\cE_{N-1}$ to pieces $Q_{N-1}^i$ of size $\delta^{N-1}$, each $Q_{N-1}^i$ consists of boundedly many (at most $M_{N-1}(i)\le M$) pieces of partition $\cE_N$, et cetera... , we have $\cE_k, k=N, N-1,...,0$, and each $Q_k^i$ is an almost ball:
if $\ell(Q), Q=Q_k^i$, denote its diameter, then there is a ball of radius $c\ell(Q)$, $c>0$, inside $Q$, and $c$ does not depend on $N, k, i$.

The existence of such lattices with all constants depending only on geometric doubling of $X$ was proved by Christ \cite{Chr}.

We relate to $\cE_k$ the projection operator $\E_k$ on $L^2(\mu)$: $\E_k f= \sum_j \langle f\rangle_{Q_k^j, \mu} \chi_{Q_k^j}$.
We also consider the martingale difference operator $\Delta_k f = E_k f- E_{k-1} f$.  Notice that it can be written as
$$
\Delta_k f = \sum_{i}\sum_{j=1}^{M_{k-1}(Q_{k-1}^i)} (f, h_{Q_{k-1}^i}^j)_{\mu} h_{Q_{k-1}^i}^j\,,
$$
where denoting $Q=Q_{k-1}^i$ we notice that  $M_{k-1}(Q)\le M$,  and $ h_{Q}^j$,  are functions supported on $Q$, orthogonal to constants in $L^2(\mu)$, orthogonal to each other in $L^2(\mu)$,  constant on each $Q_k^i\subset Q$, $Q_k^i\in \cE_k$ (such $Q_k^i$ are called the {\bf sons} of $Q_{k-1}^j$), and
having the following bound
\begin{equation}
\label{Linfty}
\|h_Q^j\|_{L^\infty(\mu)} \le \frac{C}{\mu(Q)^{1/2}}\,.
\end{equation}

\noindent{\bf Definition.} We call such $h_{Q_k}^j$ Haar functions. If $L\in \cE_m, J\in \cE_{m+n}$ we say $g(J)=g(L)+n$.

\bigskip 

We always use $\ell(S)$ to denote the diameter of a set $S$. Christ's cubes will be denoted $Q, I, J, L, K$, may be with indices.

\bigskip

We call by $\sha_{m,n}$ the operator given by the  kernel
$$
f\rightarrow \sum_{L\in \cD} \int_L a_L(x,y)f(y)dy\,,
$$
where 
$$
a_L(x,y) =\sum_{\substack{ I\subset L, J\subset L\\ g(I)= g(L)+m, \, g(J)= g(L)+n}}c_{L,I,J} h_J^j(x)h_I^i(y)\,,
$$
where $h_I^i, h_J^j$ are Haar functions normalized in $L^2(d\mu)$ and satisfying \eqref{Linfty}, and $|c_{L,I,J}|\le \frac{\sqrt{\mu(I)}\sqrt{\mu(J)}}{\mu(L)}$. 
Often we will skip superscripts $i, j$.

\bigskip

We are interested in sufficiently good estimate of  
$$
\|\sha_{m,n}\|_w:=\|\sha_{m,n}:L^2(w\,d\mu)\rightarrow L^2(w\,d\mu)\|\,,
$$
where $w\in A_2$. For such $w$ we put $\sigma=w^{-1}$ and
$$
[w]_{A_2}:=\sup_I\La w\Ra_{\mu, I}\La \sigma\Ra_{\mu, I}<\infty\,,
$$
and call it the norm of $w$ (it is not a norm).

In recent paper \cite{HPTV} the following theorem was proved ( another proof, using the Bellman function technique, was given recently in \cite{NV})
\begin{thm}
\label{poly1}
\begin{equation}
\label{polyeq1}
\|\sha_{m,n}\|_w \le C\, (m+n+1)^a [w]_{A_2}\,.
\end{equation}
\end{thm}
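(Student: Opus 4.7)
The plan is to reduce the theorem to a weighted bilinear estimate via the standard duality, expand the resulting pairing in Haar coefficients, and then run a Calder\'on--Zygmund stopping-time (principal-cube) argument together with Carleson embedding and the pointwise $A_2$ inequality. Set $\sigma=w^{-1}$; with the substitution $F=f\sigma$ (so $\|F\|_{L^2(w d\mu)}=\|f\|_{L^2(\sigma d\mu)}$), it suffices to prove
$$|\Lambda(f,g)|\;\le\;C(m+n+1)^a[w]_{A_2}\|f\|_{L^2(\sigma d\mu)}\|g\|_{L^2(w d\mu)},$$
where
$$\Lambda(f,g):=\langle\sha_{m,n}(f\sigma),gw\rangle_\mu=\sum_{L\in\cD}\sum_{I,J}c_{L,I,J}\,\langle f\sigma,h_I\rangle_\mu\,\langle gw,h_J\rangle_\mu.$$
Using \eqref{Linfty} and the hypothesis $|c_{L,I,J}|\le\sqrt{\mu(I)\mu(J)}/\mu(L)$, each term obeys the pointwise bound
$$|c_{L,I,J}\,\langle f\sigma,h_I\rangle_\mu\,\langle gw,h_J\rangle_\mu|\;\le\;C\,\frac{\sigma(I)\,w(J)}{\mu(L)}\,\langle|f|\rangle_{\sigma,I}\,\langle|g|\rangle_{w,J}.$$

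Next I build two Calder\'on--Zygmund principal families, $\mathcal{F}\subset\cD$ for $|f|$ with respect to $\sigma$ and $\mathcal{G}\subset\cD$ for $|g|$ with respect to $w$, in the usual way: the top cube is stopping, and a maximal strict descendant of a stopping cube is declared stopping as soon as the relevant average doubles. Two standard consequences will be used: (i) Carleson packing, $\sum_{F\in\mathcal{F}}\sigma(F)\langle|f|\rangle_{\sigma,F}^{\,2}\le 4\|f\|^{2}_{L^2(\sigma)}$ and its $w$-analogue for $\mathcal{G}$; and (ii) corona control, $\langle|f|\rangle_{\sigma,I}\le 2\langle|f|\rangle_{\sigma,\pi^{\mathcal{F}}(I)}$, where $\pi^{\mathcal{F}}(I)$ is the minimal element of $\mathcal{F}$ containing $I$.

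I then organize the sum by coronas: for each $L$, set $F=\pi^{\mathcal{F}}(L)$ and $G=\pi^{\mathcal{G}}(L)$. Since both contain $L$ and $\cD$ is a tree, one of $F,G$ contains the other; assume $F\subset G$ for concreteness. Within a fixed corona, and for cubes $I,J$ whose principal cubes still equal $F$ and $G$, I replace $\langle|f|\rangle_{\sigma,I}$ by $\langle|f|\rangle_{\sigma,F}$ and $\langle|g|\rangle_{w,J}$ by $\langle|g|\rangle_{w,G}$ using (ii), then apply the pointwise $A_2$ bound $\sigma(L)w(L)\le[w]_{A_2}\mu(L)^2$. Summation in $L$ telescopes within each corona, and the double sum over corona pairs is closed by Cauchy--Schwarz against the two Carleson packings in (i); this yields the desired $[w]_{A_2}\|f\|_{L^2(\sigma)}\|g\|_{L^2(w)}$ estimate for the \emph{balanced} contribution.

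The main obstacle is that $I$ and $J$ sit $m$ and $n$ generations below $L$, so $\pi^{\mathcal{F}}(I)$ can be a strict descendant of $F=\pi^{\mathcal{F}}(L)$ (and likewise for $J$), which voids the corona control for these ``transit'' cubes. This is exactly what must produce the complexity-dependent factor $(m+n+1)^{a}$. My remedy is to split $\Lambda$ into a balanced piece, treated as above, and unbalanced pieces, in which at least one new stopping cube of $\mathcal{F}$ or $\mathcal{G}$ intervenes strictly between $L$ and its descendants $I$, $J$. Since at most $m$ generations of $\mathcal{F}$-cubes can lie between $L$ and $I$ (and at most $n$ of $\mathcal{G}$-cubes between $L$ and $J$), decomposing the unbalanced sum by the intervening generation introduces at most $O(m+n)$ pieces; each is reduced to a Carleson-type estimate again by telescoping along the stopping tree. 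This is the step that converts a potentially exponential complexity dependence into the polynomial $(m+n+1)^{a}$, and organizing the bookkeeping so that no additional $[w]_{A_2}$ factors appear in this step is the real technical heart of the argument.
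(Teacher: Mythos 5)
Your proposal diverges from the paper's argument at the very first quantitative step, and that step is where a genuine gap appears.

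After expanding $\Lambda(f,g)$ in Haar coefficients you pass to the pointwise bound
$$\bigl|c_{L,I,J}\,\langle f\sigma,h_I\rangle_\mu\,\langle gw,h_J\rangle_\mu\bigr|\;\le\;C\,\frac{\sigma(I)\,w(J)}{\mu(L)}\,\langle|f|\rangle_{\sigma,I}\,\langle|g|\rangle_{w,J}\,,$$
which replaces each Haar coefficient by its trivial $L^\infty$ estimate. This discards all of the cancellation encoded in the mean-zero property of $h_I$, and that cancellation is exactly what makes the shift bounded in the first place. To see the problem at its sharpest, set $w\equiv1$, $\sigma\equiv1$: after your bound the claim you need is
$$\sum_{L}\sum_{I,J}\frac{\mu(I)\,\mu(J)}{\mu(L)}\,\langle|f|\rangle_{I}\,\langle|g|\rangle_{J}\;\lesssim\;\|f\|_{2}\,\|g\|_{2}\,,$$
and after summing over the $I,J$ inside a fixed $L$ this is $\sum_L\mu(L)\,\langle|f|\rangle_{L}\,\langle|g|\rangle_{L}$, which is already infinite (there is one such term per cube at every scale). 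The corona machinery you invoke cannot rescue this: within a fixed corona the averages $\langle|f|\rangle_{\sigma,F}$ and $\langle|g|\rangle_{w,G}$ are frozen, so the sum over $L$ in the corona reduces to $\sum_L\sigma(L)w(L)/\mu(L)\le[w]_{A_2}\sum_L\mu(L)$, again divergent. The Carleson packing estimate you list in (i) controls only the sum over stopping cubes $F\in\mathcal F$, not the sum over all $L$ in the coronas, and nothing ``telescopes.'' So the balanced piece of your decomposition is not actually estimated, and the argument breaks before the complexity bookkeeping you focus on even starts.

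The paper avoids this by never throwing away orthogonality. The key device is Lemma~\ref{decomp}: $h_I^j=\alpha_I^j\,h_I^{w,j}+\beta_I^j\,\chi_I$, splitting each unweighted Haar coefficient into an orthogonal part (against the \emph{weighted} Haar system $h_I^{w,j}$, which still has the $\ell^2$-summability $\sum_I|(\phi w,h_I^w)_\mu|^2\lesssim\|\phi\|^2_{L^2(w\,d\mu)}$) and an averaging part whose coefficient is controlled by the oscillation $\Delta_I w$ of the weight, not by the size of $f$. This yields four terms $I,II,III,IV$; the purely orthogonal term $I$ is closed by Cauchy--Schwarz directly, and the terms involving $\Delta_I w$ or $\Delta_J\sigma$ are handled by the Bellman-function Carleson measure of Lemma~\ref{uval}, which is built from the weight's oscillation, not from averages of $f$ or $g$. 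The polynomial-in-complexity factor comes from two further devices you do not anticipate: a stopping time with the small threshold $\frac{|\Delta_K w|}{\langle w\rangle_{\mu,K}}\ge\frac1{m+n+1}$ (so only $O(1)$ doubling jumps can occur along the $m$ generations separating $L$ and $I$), and the exponent trick $p=2-\frac1{m+n+1}$ in the H\"older step, which lets the resulting powers $(\mu(K)/\mu(L))^{p/2}$ absorb the constant uniformly in complexity. Your stopping criterion (doubling of the $\sigma$- and $w$-averages of $f$ and $g$) is attached to the wrong objects and does not interact with the shift's depth $m,n$ in any way that could produce a polynomial bound.

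In short: the decomposition into ``balanced'' and ``unbalanced'' coronas is a sensible organizing idea for two-weight $T1$-type arguments, but here the pointwise bound you take first destroys the summability you would need, and no subsequent bookkeeping can restore it. A correct proof must retain orthogonality for the main part of the Haar coefficient (as Lemma~\ref{decomp} does) and locate the loss of cancellation in the weight, not in $f$ or $g$.
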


In \cite{HPTV} $a=3$. Looks like here we have the same numerical value. But for its main application in \cite{HPTV}: the proof of $A_2$ conjecture, the value of $a$ (if finite) is not important.
The proof was hard and combinatorial, it was based on the ideas of \cite{LPR}, where such an estimate was proved with exponential dependence on $m+n$. We propose here a simple proof based on Bellman function technique. This  technique was  tried successfully for shifts of low complexity, first in \cite{Wit},  \cite{PetmV} then in \cite{Petm1}, \cite{Petm2}, and recently in preprint \cite{RTV}, which gives a simple unified proof of results in  \cite{Wit},   \cite{Petm1}, \cite{Petm2}. As the reader will see one needs a couple of new tricks to achieve this fulfillment.
The proof below is a direct and simple readjustment of the proof in \cite{NV}, where it has been carried out if $X=\R$, $\mu=dx$.

\bigskip

\noindent{\bf Remark.} The reasoning below is in $\R$. But one can modify it without any efforts to any $\R^d$. Moreover, in \cite{RV} the probability space of Christ's type dyadic lattices is built on any compact metric space with the property of geometric doubling (every ball  contains at most a fixed number of disjoint balls of half a radius), which allows to extend the sharp bound of \cz operators into metric space setting  by repeating the averaging trick that reduces everything to the case of dyadic shift on the metric space, and then using this preprint to give a polynomial in complexity and linear in weight estimate for any shift.

\section{The heart of the matter: a reduction to bilinear embedding estimate}
\label{bilin}

To prove Theorem \ref{poly1} we need the  following decomposition:
\begin{lm}
\label{decomp}
$$
h_I^j = \al_I ^jh_I^{w,j} + \beta_I^j \chi_I\,,
$$
where

1)  $|\alpha_I^j| \le \sqrt{\langle w\rangle_{\mu, I}}$,

2)$ |\beta_I^j| \le \frac{(h_I^{ w,j}, w)_{\mu}}{w(I)}$, where $w(I):= \int_I w\,d\mu$,

3) $\{h_I^{w,j}\}_{I} $ is supported on $I$, orthogonal to constants in $L^2(w\,d\mu)$,

4) $h_I^{w,j}$ assumes on each son $s(I)$ a constant values,

5) $\|h_I^{w,j}\|_{L^2(\mu)}=1$.
\end{lm}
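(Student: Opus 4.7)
The proof is an elementary linear-algebra construction in the finite-dimensional space $V$ of functions on $I$ that are constant on each son of $I$. The plan is to project $h_I^j$ onto the hyperplane $\{f\in V:\int_I f\,w\,d\mu=0\}$ along the direction $\chi_I$, and then renormalize. This hyperplane has codimension one, so the projection is uniquely determined and can be written down in one line.

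Concretely, I set $c:=(h_I^j,w)_\mu/w(I)$ and $g:=h_I^j-c\,\chi_I$. By construction $g$ lies in $V$, is supported on $I$, is constant on each son, and satisfies $\int_I g\,w\,d\mu=0$. I then define $h_I^{w,j}$ as the appropriate normalization of $g$ (consistent with condition (5)) and let $\alpha_I^j,\beta_I^j$ be the resulting decomposition coefficients of $h_I^j$, so that $h_I^j=\alpha_I^j h_I^{w,j}+\beta_I^j\chi_I$ with $\beta_I^j=c$ automatically. Items (3), (4), (5) are then immediate from the construction.

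The bound (1) on $\alpha_I^j$ is the main content. The $\mu$-orthogonality of $h_I^j$ and $\chi_I$ allows me to compute the squared norm of $g$ directly and separate out the $c^2\chi_I$ contribution. The key estimate is then the $L^\infty$-bound \eqref{Linfty}, which gives $\|h_I^j\|_{L^2(w\,d\mu)}^2\le\|h_I^j\|_\infty^2\,w(I)\le C^2\langle w\rangle_{\mu,I}$. Feeding this into the identity for $\|g\|^2$ and controlling the correction term via Cauchy--Schwarz yields $\alpha_I^j\le\sqrt{\langle w\rangle_{\mu,I}}$ (up to an absolute constant absorbed into the normalization of the Haar system).

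The bound (2) on $\beta_I^j$ is an identity built into the construction: pairing the decomposition $h_I^j=\alpha_I^j h_I^{w,j}+\beta_I^j\chi_I$ with $w$ in $L^2(\mu)$ and using the orthogonality from (3) pins down $\beta_I^j$ in terms of $(h_I^{w,j},w)_\mu/w(I)$, which coincides with $(h_I^j,w)_\mu/w(I)=c$. The only obstacle in the whole argument is the careful bookkeeping of the two inner products $L^2(\mu)$ and $L^2(w\,d\mu)$ on the finite-dimensional space $V$, together with matching the chosen normalization of $h_I^{w,j}$ to the bound in (1); no deeper machinery is needed beyond this elementary projection.
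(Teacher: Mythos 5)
The paper states Lemma \ref{decomp} without proof, so there is no argument to compare against; what follows is an assessment of your write-up against what a correct proof must look like. Your construction is the right one: project $h_I^j$ along $\chi_I$ onto the $L^2(w\,d\mu)$-orthogonal complement of constants, then normalize. However, the write-up contains two genuine errors, both of which trace back to (almost certainly) misprints in the statement of the lemma that you should have flagged rather than reproduced.

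First, the normalization in item (5) cannot be in $L^2(\mu)$: with $h_I^{w,j}=g/\|g\|_{L^2(\mu)}$ one gets $\alpha_I^j=\|g\|_{L^2(\mu)}$, and by $\mu$-orthogonality of $h_I^j$ and $\chi_I$,
\[
\|g\|_{L^2(\mu)}^2=\|h_I^j\|_{L^2(\mu)}^2+c^2\mu(I)\ge 1,
\]
so $\alpha_I^j\ge 1$ and item (1) fails whenever $\langle w\rangle_{\mu,I}<1$. (Abstractly: testing $h_I^j=\alpha h_I^{w,j}+\beta\chi_I$ against $h_I^j$ in $L^2(\mu)$ forces $\alpha\,(h_I^j,h_I^{w,j})_\mu=1$, hence $|\alpha|\ge 1$ for two $L^2(\mu)$-unit vectors.) The normalization that makes the lemma consistent is $\|h_I^{w,j}\|_{L^2(w\,d\mu)}=1$. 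With that choice item (1) is immediate, because $g$ is the $L^2(w\,d\mu)$-orthogonal projection of $h_I^j$ away from $\chi_I$, so
\[
\alpha_I^j=\|g\|_{L^2(w\,d\mu)}\le \|h_I^j\|_{L^2(w\,d\mu)}\le \|h_I^j\|_\infty\sqrt{w(I)}\le C\sqrt{\langle w\rangle_{\mu,I}}
\]
by \eqref{Linfty}. Your sketch of (1) mixes the two inner products: the step ``separate out the $c^2\chi_I$ contribution by $\mu$-orthogonality'' produces a formula for $\|g\|_{L^2(\mu)}^2$, into which a bound on $\|h_I^j\|_{L^2(w\,d\mu)}^2$ cannot be ``fed.'' The Pythagoras you need is in $L^2(w\,d\mu)$, where $g\perp\chi_I$ precisely because $\int_I g\,w\,d\mu=0$.

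Second, your final paragraph asserts that $(h_I^{w,j},w)_\mu$ ``coincides with'' $(h_I^j,w)_\mu$. That is false: item (3) says $(h_I^{w,j},w)_\mu=\int_I h_I^{w,j}\,w\,d\mu=0$, whereas $(h_I^j,w)_\mu=c\,w(I)$ is generically nonzero. Pairing the decomposition with $w$ in $L^2(\mu)$ and invoking (3) gives exactly $\beta_I^j=(h_I^j,w)_\mu/w(I)$; this is the quantity that \eqref{delta}--\eqref{delta1} bound and that yields $2')$, so item (2) should be read with $h_I^j$ in the numerator, not $h_I^{w,j}$ (as literally written it would force $\beta_I^j=0$, which would make $h_I^j$ $w\,d\mu$-orthogonal to constants, false in general). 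Once these two points are repaired, your proof is correct and complete.
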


\bigskip

\noindent{\bf Definition.}
Let 
$$
\Delta_I w:= \sum_{\text{sons of}\,\,I} |\langle w\rangle_{\mu, s(I)}-\langle w\rangle_{\mu, I}|\,.
$$
\noindent{\bf Remark.} There are many $\Delta$'s in this paper, but the reader should notice that $\Delta_k$ is an operator sending functions from $L^2(\mu)$ to such functions with extra properties of being constants on each element of $\cE_k$ and also being orthogonal in $L^2(\mu)$ to $\chi_L$ for any element $L$ of $\cE_{k-1}$. On the other hand, $\Delta_I w$ (as well as $\Delta_J\sigma $) is a non-negative number.

It is a easy to see (by a small linear algebra reasoning)  that the doubling property of measure $\mu$ implies
\begin{equation}
\label{delta}
|(h_I^{ w,j}, w)_{\mu}|\le C\,\cdot (\Delta_I w)\,\sqrt{\mu(I)}\,.
\end{equation}
Here $C$ depends only on the doubling constant of measure $\mu$.  In other words, taking into account that $\Delta_k w =\sum_{I\in \cE_{k-1}}\sum_{j=1}^{M_{k-1}(I)} (w, h_I^{w,j})_{\mu} h_I^{w,j}$ and given $I\in \cE_{k-1}$, we can rewrite \eqref{delta} as follows
\begin{equation}
\label{delta1}
\chi_I |\Delta_k w| \le C\, \cdot   (\Delta_I w)\, ,
\end{equation}
where $C$ again depends only on the doubling constant of $\mu$.

Therefore, the property 2) above can be rewritten as 

\medskip

2') $|\beta_I^j|\le C\,\frac{|\Delta_I w|}{\langle w\rangle_{\mu, I}} \frac{1}{\sqrt{\mu (I)}}$.

\vspace{.2in}

Fix $\phi\in L^2(w\,d\mu), \psi\in L^2(\sigma)$. We need to prove
\begin{equation}
\label{main}
|(\sha_{m,n}\phi w,\psi\sigma)|\le C\, (n+m+1)^a\|\phi\|_{w}\|\psi\|_{\sigma}\,.
\end{equation}

We  estimate $(\sha_{m,n}\phi w, \psi \sigma)$ as (we skip superscripts $j, i$ and write just $h_J, h_I$):

$$
|\sum_L\sum_{ I, J} c_{L,I,J}(\phi w, h_I)_{\mu}(\psi \sigma, h_J)_{\mu}|\le 
$$
$$
\sum_L\sum_{ I,J} |c_{L,I,J} (\phi w, h^w_I)_{\mu}\sqrt{\langle w \rangle_{\mu, I}}(\psi \sigma,h^{\sigma}_J)_{\mu}|\sqrt{\langle \sigma \rangle_{\mu, J}}|\,+
$$
$$
\sum_L\sum_{I,J} |c_{L,I,J} \langle \phi w\rangle_{\mu, I}\frac{\Delta_I w}{\langle w \rangle_{\mu, I}}(\psi \sigma,h^{\sigma}_J)_{\mu}\sqrt{\langle \sigma \rangle_{\mu, J}}\sqrt{\mu(I)}|\,+
$$
$$
\sum_L\sum_{I,J} |c_{L,I,J} \langle \psi \sigma\rangle_{\mu, J}\frac{\Delta_J\sigma}{\langle \sigma \rangle_{\mu, J}}(\phi w,h^{w}_I)_{\mu}\sqrt{\langle w \rangle_{\mu, I}}\sqrt{\mu(J)}|\,+
$$
$$
\sum_L\sum_{I,J}|c_{L,I,J} \langle \phi w\rangle_{\mu, I}\langle \psi \sigma\rangle_{\mu, J} \frac{\Delta_I w}{\langle w \rangle_{\mu, I}} \frac{\Delta_J\sigma}{\langle \sigma \rangle_{\mu, J}}\sqrt{\mu(I)}\sqrt{\mu(J)}|=: I + II +III +IV\,.
$$

We can notice  that  because we have $|c_{L,I,J}|\le \frac{\sqrt{\mu(I)}\sqrt{\mu(J)}}{\mu(L)}$,  each sum inside $L$ can be estimated  by a perfect product of $S$ and $R$ terms, where
$$
R_L(\phi w):= \sum_{I\subset L...} \langle \phi w\rangle_{\mu, I}  \frac{|\Delta_I w|}{\langle w \rangle_{\mu, I}}\frac{\mu(I)}{\sqrt{\mu(L)}}
$$
$$
S_L(\phi w) := \sum_{I\subset L...} (\phi w, h^w_I)_{\mu}\sqrt{\langle w \rangle_{\mu, I}}\frac{\sqrt{\mu(I)}}{\sqrt{\mu(L)}}
$$
and the corresponding terms for $\psi \sigma$.
So we have
$$
I \le \sum_L S_L(\phi w)S_L(\psi\sigma)
,\,
II \le \sum_L S_L(\phi w)R_L(\psi\sigma),\,\,
\,
$$
$$
III\le \sum_L R_L(\phi w)S_L(\psi\sigma)
,\,\,\,
IV\le \sum_L R_L(\phi w)R_L(\psi\sigma)\,.
$$
Now
\begin{equation}
\label{sl}
S_L(\phi w) \le \sqrt{\sum_{I\subset L...} |(\phi w, h^w_I)_{\mu}|^2}\sqrt{\La w\Ra_{\mu, L}}\,,\,\,\,S_L(\psi \sigma) \le \sqrt{\sum_{J\subset L...} |(\psi \sigma, h^\sigma_J)|^2}\sqrt{\La \sigma\Ra_{\mu, L}}
\end{equation}
Therefore,
\begin{equation}
\label{I}
I\le C\QQ^{1/2} \|\phi\|_w\|\psi\|_{\sigma}\,.
\end{equation}

Terms $II, III$ are symmetric, so consider $III$.
Using Bellman function $(xy)^{\al}$ one can prove now 
\begin{lm}
\label{uval}
The sequence 
$$
\mu_I := \La w\Ra_{\mu, I}^{\al}\La \sigma\Ra_{\mu, I}^{\al}\bigg(\frac{|\Delta_I w|^2}{\La w \Ra_{\mu, I}^2} + \frac{|\Delta_I \sigma|^2}{\La \sigma \Ra_{\mu, I}^2}\bigg) \mu(I)
$$
form a Carleson measure with Carleson constant  at most $c_{\al}Q^{\al}$, where $Q:=[w]_{A_2}$ for any $\al\in (0, 1/2)$. Here $c_{\al}$ depends only on $\al$ and the doubling constant of $\mu$ (and is independent of the doubling constants of $w\,d\mu$, $\sigma\,d\mu$).
\end{lm}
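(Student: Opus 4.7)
The argument is a Bellman function argument with the simple choice
$$
\cB(x,y) := (xy)^{\alpha},\qquad x,y>0.
$$
Two properties of $\cB$ matter. First, whenever $(x,y)=(\La w\Ra_{\mu,I},\La\sigma\Ra_{\mu,I})$ for some Christ cube $I$, the $A_2$ hypothesis gives $xy\le Q$, hence $0\le\cB(x,y)\le Q^{\alpha}$. Second, for $\alpha\in(0,1/2)$ a direct computation of the $2\times 2$ Hessian, combined with the AM--GM absorption $2|d_1 d_2|(xy)^{\alpha-1}\le x^{\alpha-2}y^{\alpha}d_1^2+x^{\alpha}y^{\alpha-2}d_2^2$ of the off-diagonal term, yields the strong concavity bound
$$
-(d_1,d_2)\operatorname{Hess}\cB(x,y)(d_1,d_2)^T \;\ge\; \alpha(1-2\alpha)\bigl[x^{\alpha-2}y^{\alpha}d_1^2+x^{\alpha}y^{\alpha-2}d_2^2\bigr].
$$

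The heart of the proof is the local one-step inequality: for every Christ cube $I$, writing $p_s:=\mu(s)/\mu(I)$ for each son $s$ of $I$,
$$
\cB(\La w\Ra_{\mu,I},\La\sigma\Ra_{\mu,I}) - \sum_{s}p_s\,\cB(\La w\Ra_{\mu,s},\La\sigma\Ra_{\mu,s}) \;\ge\; c_{\alpha}(\La w\Ra_{\mu,I}\La\sigma\Ra_{\mu,I})^{\alpha}\!\left[\frac{|\Delta_I w|^2}{\La w\Ra_{\mu,I}^2}+\frac{|\Delta_I\sigma|^2}{\La\sigma\Ra_{\mu,I}^2}\right].
$$
I would derive this via Taylor's integral remainder: the left-hand side equals $-\sum_s p_s\int_0^1(1-t)(d_s)^T\operatorname{Hess}\cB(\gamma_s(t))(d_s)\,dt$, where $d_s:=(\La w\Ra_{\mu,s}-\La w\Ra_{\mu,I},\La\sigma\Ra_{\mu,s}-\La\sigma\Ra_{\mu,I})$ and $\gamma_s$ is the segment joining $(\La w\Ra_{\mu,I},\La\sigma\Ra_{\mu,I})$ to $(\La w\Ra_{\mu,s},\La\sigma\Ra_{\mu,s})$. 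The pointwise Hessian bound above turns this into a positive integrand; one-variable computations of $\int_0^1(1-t)\gamma_1(t)^{\alpha-2}\gamma_2(t)^{\alpha}\,dt$ and its twin give the stated lower bound with $c_{\alpha}$ depending only on $\alpha$ and on the doubling constant of $\mu$ (doubling yields $p_s\ge c_0>0$, hence $\La w\Ra_{\mu,s}\le\La w\Ra_{\mu,I}/c_0$, and the analogous bound for $\sigma$). A final Cauchy--Schwarz over the bounded number $M$ of sons converts $\sum_s p_s\bigl((\La w\Ra_{\mu,s}-\La w\Ra_{\mu,I})/\La w\Ra_{\mu,I}\bigr)^2$ into $(\Delta_I w/\La w\Ra_{\mu,I})^2$, and similarly for $\sigma$.

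Multiplying the one-step inequality by $\mu(I)$ and summing over all lattice cubes $I\subset L$ telescopes exactly (minimal cubes have no sons, so there is no boundary correction) to
$$
c_{\alpha}\sum_{I\subset L}\mu_I \;\le\; \cB(\La w\Ra_{\mu,L},\La\sigma\Ra_{\mu,L})\,\mu(L) \;\le\; Q^{\alpha}\mu(L),
$$
which is precisely the Carleson estimate.

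The main obstacle is the one-step inequality itself. The Hessian is evaluated along the segment $\gamma_s$ rather than at the base point, and the factor $\gamma_2(t)^{\alpha}$ can collapse when $\La\sigma\Ra_{\mu,s}\ll\La\sigma\Ra_{\mu,I}$. What saves the argument is the $(1-t)$ weight in Taylor's remainder combined with the doubling bound $\La w\Ra_{\mu,s}\le\La w\Ra_{\mu,I}/c_0$: a routine one-variable integration (using $\alpha<1$, so no singularity is integrated) shows the relevant integrals behave as expected uniformly in the son-to-parent ratio. The strict inequality $\alpha<1/2$ enters essentially via the AM--GM absorption of the off-diagonal Hessian term and cannot be relaxed.
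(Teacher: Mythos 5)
Your argument is correct and follows essentially the same route as the paper: the Bellman function $(xy)^{\alpha}$, the Hessian lower bound $\alpha(1-2\alpha)(xy)^{\alpha}\bigl((dx/x)^2+(dy/y)^2\bigr)$ via AM--GM on the off-diagonal term, Taylor with integral remainder applied along the parent-to-son segments, and the $\mu$-doubling upper bound $\La w\Ra_{\mu,s}\le C\La w\Ra_{\mu,I}$ to keep the constant independent of the doubling of $w$. The only (minor) divergence is in how the degeneration of the son average is tamed: the paper simply discards the $t\in[1/2,1]$ part of the Taylor remainder (using that $-q''\ge 0$ there) and on $[0,1/2]$ both coordinates of $\gamma_s(t)$ stay within a factor $2$ of the base point, whereas you keep the full integral and observe that the $(1-t)$ remainder weight plus the trivial bound $\gamma_i(t)\ge(1-t)\gamma_i(0)$ make $\int_0^1(1-t)\gamma_1^{\alpha-2}\gamma_2^{\alpha}\,dt$ converge with a constant depending only on $\alpha$ and the doubling of $\mu$; both are equally valid and both reduce to the same ingredients.
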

\begin{proof}
We need a very simple 

\medskip

\noindent{\bf Sublemma}.
Let $Q> 1, 0<\alpha<\frac12$. In domain  $\Omega_Q:=\{(x,y): X>o, y>0, 1<xy\le Q$ function $B_Q(x,y):=x^{\al}y^{\al}\}$ satisfies the following estimate of its Hessian matrix  (of its second differential form, actually)
$$
-d^2 B_Q(x,y)\ge \al(1-2\al)x^{\al}y^{\al}\bigg(\frac{(dx)^2}{x^2} +\frac{(dy)^2}{y^2}\bigg)\,.
$$
The form $-d^2 B_Q(x,y)\ge 0$ everywhere in $x>0, y>0$. Also obviously $0\le B_Q(x,y) \le Q^{\al}$ in $\Omega_Q$.
\begin{proof}
Direct calculation.
\end{proof}

\medskip

Fix now a Christ's cube $I$ and let $s_i(I), i=1,...,M$, be all its sons. Let $a=(\La w\Ra_{\mu, I}, \La \sigma\Ra_{\mu, I})$, $b_i=(\La w\Ra_{\mu,s_i( I)}, \La \sigma\Ra_{\mu, s_i(I)})$, $i=1,\dots, M$, be points--obviously--in $\Omega_Q$, where $Q$ temporarily means $[w]_{A_2}$. Consider $c_i(t)=a(1-t)+ b_it, 0\le t \le 1$ and $q_i(t):= B_Q(c_i(t))$.  We want to use Taylor's formula
\begin{equation}
\label{Lagr}
q_i(0)-q_i(1) = -q'_i(0) - \int_0^1dx\int_0^x q_i''(t)\,dt\,.
\end{equation}
Notice two things: Sublemma  shows that $-q_i''(t) \ge 0$ always. Moreover, it shows that if $t\in [0,1/2]$,  then we have that the following qualitative estimate holds:
\begin{equation}
\label{wI}
-q_i''(t) \ge c\,( \La w\Ra_{\mu, I} \La \sigma\Ra_{\mu, I})^{\al}\bigg(\frac{(\La w\Ra_{\mu,s_i( I)}-\La w\Ra_{\mu, I})^2}{\La w\Ra_{\mu, I}^2} +\frac{(\La \sigma\Ra_{\mu,s_i( I)}-\La \sigma\Ra_{\mu, I})^2}{\La \sigma\Ra_{\mu, I}^2} \bigg)
\end{equation}
This requires a small explanation. If we are on the segment $[a, b_i]$, then the first coordinate of such a point cannot be larger than $C\, \La w\Ra_{\mu, I}$, where $C$ depends only on doubling of $\mu$ (not $w$). This is obvious. The same is true for the second coordinate with the obvious change of $w$ to $\sigma$. But there is no such type of estimate from below on this segment:  the first coordinate cannot be smaller than $k\, \La w\Ra_{\mu, I}$, but $k$ may (and will) depend on the doubling of $w$ (so ultimately on its $[w]_{A_2}$ norm. In fact, at the ``right" endpoint of $[a, b_i]$. The first coordinate is $\La w\Ra_{\mu, s_i(I)}\le \int_I\,w\,d\mu/ \mu(s_i(I)) \le C\,  \int_I\,w\,d\mu/ \mu(I)=C\, \La w\Ra_{\mu, I}$, with $C$ only depending on the doubling of $\mu$. But the estimate from below will involve the doubling of $w$, which we must avoid. But if $t\in [0,1/2]$, and we are on the ``left half" of interval $[a, b_i]$ then obviously the first coordinate is $\ge \frac12 \La w\Ra_{\mu, I}$ and the second coordinate is $\ge \frac12 \La \sigma\Ra_{\mu, I}$.

We do not need to integrate $-q_i''(t)$ for all $t\in [0,1]$ in \eqref{Lagr}. We can only use integration over $[0,1/2]$  noticing that $-q_i''(t)\ge 0$ otherwise. Then the chain rule 
$$
q_i''(t)=(B_Q(c_i(t))''=(d^2B_Q(c_i(t)) (b_i-a), b_i-a)\,,
$$ 
(where $(\cdot, \cdot)$ means the usual scalar product in $\R^2$) immediately gives us \eqref{wI} with constant $c$ depending on the doubling of $\mu$ but {\it independent} of the doubling of $w$.

\medskip

Next step is to add all \eqref{Lagr}, with convex coefficients $\frac{\mu(s_i(I))}{\mu(I)}$, and to notice that $\sum_{i=1}^M\frac{\mu(s_i(I))}{\mu(I)} q_i'(0) =\nabla B_{Q}(a) \sum_{i=1}^M\cdot (a-b_i)\frac{\mu(s_i(I))}{\mu(I)}=0$, because by definition
$$
a= \sum_{i=1}^M \frac{\mu(s_i(I))}{\mu(I)}\,b_i\,.
$$
Notice that the addition of all \eqref{Lagr}, with convex coefficients $\frac{\mu(s_i(I))}{\mu(I)}$ gives us now ( we take into account \eqref{wI} and positivity of $-q_i''(t)$)
$$
B_Q(a)- \sum_{i=1}^M \frac{\mu(s_i(I))}{\mu(I)}\,B_Q(b_i) \ge c\,c_1\,( \La w\Ra_{\mu, I} \La \sigma\Ra_{\mu, I})^{\al}\sum_{i=1}^M\bigg(\frac{(\La w\Ra_{\mu,s_i( I)}-\La w\Ra_{\mu, I})^2}{\La w\Ra_{\mu, I}^2} +\frac{(\La \sigma\Ra_{\mu,s_i( I)}-\La \sigma\Ra_{\mu, I})^2}{\La \sigma\Ra_{\mu, I}^2} \bigg)\,.
$$
We used here the doubling of $\mu$ again, by noticing that $\frac{\mu(s_i(I))}{\mu(I)}\ge c_1$ (recall that $s_i(I)$ and $I$ are almost balls of comparable radii).
We rewrite the previous inequality using our definition of $\Delta_I w, \Delta_I\sigma$ listed above as follows
$$
\mu(I)\,B_Q(a)- \sum_{i=1}^M \mu(s_i(I))\,B_Q(b_i) \ge c\,c_1\,( \La w\Ra_{\mu, I} \La \sigma\Ra_{\mu, I})^{\al}\bigg(\frac{(\Delta_I w)^2}{\La w\Ra_{\mu, I}^2} +\frac{(\Delta_I\sigma)^2}{\La \sigma\Ra_{\mu, I}^2} \bigg)\mu(I)\,.
$$
Notice that $B_Q(a)=\La w\Ra_{\mu, I}\La\sigma\Ra_{\mu,I}$. Now we iterate the above inequality and get for any of Christ's dyadic $I$'s:
$$
\sum_{J\subset I\,, J\in\cD} ( \La w\Ra_{\mu, J} \La \sigma\Ra_{\mu, J})^{\al}\bigg(\frac{(\Delta_J w)^2}{\La w\Ra_{\mu, J}^2} +\frac{(\Delta_J\sigma)^2}{\La \sigma\Ra_{\mu, J}^2} \bigg)\mu(J) \le C\, Q^{\al}\mu(I)\,.
$$
This is exactly the Carleson property of the measure $\{\mu_I\}$ indicated in our Lemma \ref{uval}, with Carleson constant $C\,Q^{\al}$. The proof showed that $C$ depended only on $\al\in (0, 1/2)$ and on the doubling constant of measure $\mu$.

\end{proof}

Now, using this lemma, we start to estimate our $S_L$'s and $R_L$'s. For $S_L(\psi\sigma)$ we already had  estimate \eqref{sl}.

To estimate $R_L(\phi w)$ let us denote by $\cP_L$  maximal stopping intervals $K\in \cD, K\subset L$, where the stopping criteria are 1) either $\frac{|\Delta_K w|}{\La w\Ra_{\mu, K}} \ge \frac{1}{m+n+1}$, or  $\frac{|\Delta_K \sigma|}{\La \sigma\Ra_{\mu, K}} \ge \frac{1}{m+n+1}$, or
2) $g(K)= g(L)+m$.

\begin{lm}
\label{sbor}
If $K$ is any stopping interval then
\begin{equation}
\label{K}
\sum_{I\subset K, \ell(I)=2^{-m} \ell(L) } |\La \phi  w\Ra_{\mu, I} | \frac{|\Delta_I w|}{\La w\Ra_{\mu, I}}\frac{\mu(I)}{\sqrt{\mu(L)}} \le 2e^{\al} (m+n+1) \La |\phi | w\Ra_{\mu, K} \frac{\sqrt{\mu(K)}}{\sqrt{\mu(L)}}\sqrt{\mu_K}\La w\Ra_{\mu, L}^{-\al/2} \La \sigma\Ra_{\mu, L}^{-\al/2}\,.
\end{equation}
\end{lm}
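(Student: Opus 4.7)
A cube $K\in\cP_L$ is maximal in $L$ satisfying either (A) $g(K)=g(L)+m$, or (B) $|\Delta_Kw|/\La w\Ra_{\mu,K}\ge 1/(m+n+1)$ or $|\Delta_K\sigma|/\La\sigma\Ra_{\mu,K}\ge 1/(m+n+1)$. The proof splits on these two possibilities. Before the cases, I record a key preliminary estimate on the averages at $K$. For every $I'\in\cD$ with $L\supsetneq I'\supsetneq K$, the maximality of $K$ forces $|\Delta_{I'}w|/\La w\Ra_{\mu,I'}<1/(m+n+1)$ and similarly for $\sigma$; in particular the child-to-parent ratio $\La w\Ra_{\mu,s(I')}/\La w\Ra_{\mu,I'}$ lies in $[1-1/(m+n+1),1+1/(m+n+1)]$ for the unique son $s(I')$ containing $K$. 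Iterating along the chain from $L$ down to $K$ (of length at most $m$) yields $\La w\Ra_{\mu,K}/\La w\Ra_{\mu,L}\in\bigl[(1-1/(m+n+1))^m,(1+1/(m+n+1))^m\bigr]\subset[e^{-1},e]$, and the same bracket for $\sigma$. Hence
\[
\La w\Ra_{\mu,K}^{\al/2}\La\sigma\Ra_{\mu,K}^{\al/2}\ge e^{-\al}\La w\Ra_{\mu,L}^{\al/2}\La\sigma\Ra_{\mu,L}^{\al/2}.
\]

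In Case (A) the sum on the left-hand side of the lemma reduces to the single term $I=K$. Combining $|\La\phi w\Ra_{\mu,K}|\le\La|\phi|w\Ra_{\mu,K}$ with the trivial inequality $|\Delta_Kw|/\La w\Ra_{\mu,K}\le\sqrt{|\Delta_Kw|^2/\La w\Ra_{\mu,K}^2+|\Delta_K\sigma|^2/\La\sigma\Ra_{\mu,K}^2}$ and the definition of $\mu_K$, the desired inequality reduces to the preliminary ratio estimate displayed above, with room to spare (the factor $(m+n+1)$ is not used in this case).

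In Case (B) the large-ratio stopping directly gives $\sqrt{\mu_K}\ge(m+n+1)^{-1}\La w\Ra_{\mu,K}^{\al/2}\La\sigma\Ra_{\mu,K}^{\al/2}\sqrt{\mu(K)}$, which combined with the preliminary ratio estimate yields $\sqrt{\mu_K}\,\La w\Ra_{\mu,L}^{-\al/2}\La\sigma\Ra_{\mu,L}^{-\al/2}\ge e^{-\al}(m+n+1)^{-1}\sqrt{\mu(K)}$; hence the right-hand side of the lemma is at least $2\La|\phi|w\Ra_{\mu,K}\mu(K)/\sqrt{\mu(L)}$ once the prefactor $2e^{\al}(m+n+1)$ swallows these two factors. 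For the left-hand side, estimate termwise $|\La\phi w\Ra_{\mu,I}|\mu(I)=\bigl|\int_I\phi w\,d\mu\bigr|\le\int_I|\phi|w\,d\mu$ together with the universal pointwise bound $|\Delta_Iw|/\La w\Ra_{\mu,I}\le 2$ (two sons in the $\R$ model; in general, a $\mu$-doubling constant, absorbed into the stated $2e^{\al}$). Summing over $I\subset K$ with $g(I)=g(L)+m$ telescopes $\sum_I\int_I|\phi|w\,d\mu=\int_K|\phi|w\,d\mu=\La|\phi|w\Ra_{\mu,K}\mu(K)$, completing Case (B).

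The main obstacle is the lack of any stopping information on $|\Delta_Iw|/\La w\Ra_{\mu,I}$ for cubes $I$ strictly inside a Case (B) stopping cube $K$---only the trivial pointwise bound from doubling of $\mu$ is available. The argument succeeds precisely because the large-ratio stopping at $K$ produces a $(m+n+1)^{-1}$ factor already sitting inside $\sqrt{\mu_K}$; this factor cancels against the explicit $(m+n+1)$ on the right-hand side, leaving only an absolute constant from the doubling bound, which is what the statement's $2e^{\al}$ records.
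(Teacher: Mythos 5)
Your proof is correct and follows essentially the same two-case decomposition and estimates as the paper's: for a stopping cube triggered by the generation bound you reduce to a single term and use $\mu_K$ directly, and for a large-ratio stopping cube you telescope the sum over $K$ with the crude pointwise bound on $|\Delta_I w|/\La w\Ra_{\mu,I}$ and recover the lost $(m+n+1)$ from the stopping condition, finishing in both cases by replacing $K$-averages with $L$-averages via the chain estimate. The only (cosmetic) difference is that in the large-ratio case you bound the right-hand side from below by the intermediate quantity $2\La|\phi|w\Ra_{\mu,K}\mu(K)/\sqrt{\mu(L)}$ rather than chaining inequalities from the left-hand side forward, which is the same content rearranged.
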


\begin{proof}
If we stop by the first criterion, then
$$
\sum_{I\subset K, \ell(I)=2^{-m} \ell(L) } |\La \phi w\Ra_{\mu, I} | \frac{|\Delta_I w|}{\La w\Ra_{\mu, I}}\frac{\mu(I)}{\sqrt{\mu(L)}} \le 2\sum_{I\subset K, \ell(I)=2^{-m} \ell(L) } |\La \phi w\Ra_{\mu, I} | \mu(I) \frac{1}{\mu(K)}\frac{\mu(K)}{\sqrt{\mu(L)}} \le 2\,\La | \phi | w\Ra_{\mu, K}  \frac{\mu(K)}{\sqrt{\mu(L)}}
$$
$$
\le 2(m+n+1) \La |\phi | w\Ra_{\mu, K}\bigg( \frac{|\Delta_K w|}{\La w\Ra_{\mu, K}} + \frac{|\Delta_K \sigma|}{\La \sigma\Ra_{\mu, K}} \bigg)\frac{\mu(K)}{\sqrt{\mu(L)}}\le  2(m+n+1) \La |\phi | w\Ra_{\mu, K} \frac{\sqrt{\mu(K)}}{\sqrt{\mu(L)}}\sqrt{\mu_K}\La w\Ra_{\mu, K}^{-\al/2} \La \sigma\Ra_{\mu, K}^{-\al/2}\,.
$$
Now  replacing $\La w\Ra_{\mu, K}^{-\al/2} \La \sigma\Ra_{\mu, K}^{-\al/2}$ by $\La w\Ra_{\mu, L}^{-\al/2} \La \sigma\Ra_{\mu, L}^{-\al/2}$ does not grow the estimate by more than $e^{\al}$ as all pairs of son/father intervals  larger than $K$ and smaller than $L$ will have there averages compared by constant at most $1\pm \frac1{m+n+1}$. And there are at most $m$ such intervals between $K$ and $L$.

If we stop by the second criterion, then $K$ is one of $I$'s, $g(I)=g(L)+m$, and
$$
\ |\La \phi w\Ra_{\mu, I} |\frac{|\Delta_I w|}{\La w\Ra_{\mu, I}}\frac{\mu(I)}{\sqrt{\mu(L)}} \le | \La \phi w\Ra_{\mu, K} | \frac{\mu(K)}{\sqrt{\mu(L)}}\frac{|\Delta_K w|}{\La w\Ra_{\mu, K}}\le  \La |\phi | w\Ra_{\mu, K}  \frac{\sqrt{\mu(K)}}{\sqrt{\mu(L)}}\sqrt{\mu_K}\La w\Ra_{\mu, K}^{-\al/2} \La \sigma\Ra_{\mu, K}^{-\al/2}\,.
$$
Now  we replace $\La w\Ra_{\mu, K}^{-\al/2} \La \sigma\Ra_{\mu, K}^{-\al/2}$ by $\La w\Ra_{\mu, L}^{-\al/2} \La \sigma\Ra_{\mu, L}^{-\al/2}$  as before.

\end{proof}

Now
$$
R_L(\phi w) \le C(m+n+1) \La w\Ra_{\mu, L}^{-\al/2} \La \sigma\Ra_{\mu, L}^{-\al/2}\sum_{K\, \in \cP_L}  \La |\phi | w\Ra_{\mu, K} \frac{\sqrt{\mu(K)}}{\sqrt{\mu(L)}}\sqrt{\mu_K}
$$
$$
\le  C(m+n+1) \La w\Ra_{\mu, L}^{-\al/2} \La \sigma\Ra_{\mu, L}^{-\al/2} \bigg(\sum_{K\,  \in \cP_L}  \La |\phi | w\Ra_{\mu, K}^2 \frac{{\mu(K)}}{\mu(L)}\bigg)^{1/2} (\wt\mu(L))^{1/2}\,,
$$
where
$$
\wt\mu_L =\sum_{K\,  \in \cP_L} \mu_K\,.
$$
Notice that $\wt\mu_L$ form a Carleson measure with constant at most $C(m+1) Q^{\al}$.

Now we make a trick! We will estimate the right hand side  as
$$
R_L(\phi w) \le C(m+n+1)\La w\Ra_{\mu, L}^{-\al/2} \La \sigma\Ra_{\mu, L}^{-\al/2} \bigg(\sum_{K\,  \in \cP_L}  \La |\phi |w\Ra_{\mu, K}^p \frac{{\mu(K)}}{\mu(L)}\bigg)^{1/p} (\wt\mu_L)^{1/2}\,,
$$
where $p=2-\frac1{m+n+1}$. In fact,
$$
\bigg(\sum_{K\subset L, \,K\, is \,\, maximal}  \La | \phi |w\Ra_{\mu, K}^2 \frac{{\mu(K)}}{\mu(L)}\bigg)^{p/2}\le \sum_{K\,  \in \cP_L}  \La |\phi |w\Ra_{\mu, K}^p\bigg(\frac{{\mu(K)}}{\mu(L)}\bigg)^{p/2}\,.
$$
But if if $0\le j\le m$, then $(C^{-j})^{-\frac1{m+n+1}}\le C$, and therefore in the formula above $\bigg(\frac{{\mu(K)}}{\mu(L)}\bigg)^{1-\frac1{2(m+n+1)}} \le C\, \frac{{\mu(K)}}{\mu(L)}$, and $C$ depends only on the doubling constant of $\mu$. So the trick is justified.
Therefore, using Cauchy inequality, one gets
$$
R_L(\phi w) \le C(m+n+1)\La w\Ra_{\mu, L}^{-\al/2} \La \sigma\Ra_{\mu, L}^{-\al/2} \bigg(\sum_{K\,  \in \cP_L}  \La |\phi |^p w\Ra_{\mu, K}\La w\Ra_{\mu, K}^{p-1} \frac{{\mu(K)}}{\mu(L)}\bigg)^{1/p} (\wt\mu_L)^{1/2}\,.
$$
We can replace all $ \La w\Ra_{\mu, K}^{p-1}$ by $\La w\Ra_{\mu, L}^{p-1}$ paying the price by constant. This is again because all intervals larger than $K$ and smaller than $L$ will have there averages compared by constant at most $1\pm \frac1{m+n+1}$. And there are at most $m$ such intervals between $K$ and $L$. Finally,
\begin{equation}
\label{RL}
R_L(\phi w) \le C(m+n+1)\La w\Ra_{\mu, L}^{-\al/2} \La \sigma\Ra_{\mu, L}^{-\al/2} \bigg(\sum_{K\,  \in \cP_L}  \La |\phi |^p w\Ra_{\mu, K}\frac{{\mu(K)}}{\mu(L)}\bigg)^{1/p}\La w\Ra_{\mu, L}^{1-\frac1p}  (\wt\mu_L)^{1/2}
\end{equation}

We need the standard notations: if $\nu$ is an arbitrary positive measure we denote
$$
M_{\nu}f(x):=\sup_{r>0}\frac1{\nu(B(x,r))}\int_{B(x,r)} |f(x)|\,d\nu(x)\,.
$$
In particular $M_w$ will stand for this maximal function with $d\nu =w(x)\,d\mu$.

\bigskip

From \eqref{RL} we get
\begin{equation}
\label{RL1}
R_L(\phi w) \le C(m+n+1)\La w\Ra_{\mu, L}^{1-\al/2} \La \sigma\Ra_{\mu, L}^{-\al/2} \inf_L\, M_{w}(|\phi |^p )^{1/p} (\wt\mu_L)^{1/2}
\end{equation}

Now
\begin{equation}
\label{SR}
S_L(\psi\sigma)R_L(\phi w) \le C(m+n+1)\La w\Ra_{\mu, L}^{1-\al/2} \La \sigma\Ra_{\mu, L}^{1-\al/2}\frac{\inf_L \,M_{w}(|\phi |^p )^{1/p}}{\La \sigma\Ra_{\mu, L}^{1/2}} (\wt\mu_L)^{1/2}\sqrt{\sum_{J\subset L...} |(\psi \sigma, h^\sigma_J)|^2}\,,
\end{equation}
\begin{equation}
\label{RR}
R_L(\psi\sigma)R_L(\phi w) \le C(m+n+1)\La w\Ra_{\mu, L}^{1-\al} \La \sigma\Ra_{\mu, L}^{1-\al}\inf_L \,\,M_{w}(|\phi |^p )^{1/p}\inf_L\, M_{\sigma}(|\psi |^p )^{1/p} \wt\mu_L\,.
\end{equation}

Now we use the Carleson property of $\{\wt\mu_L\}_{L\in\cD}$. We need a simple folklore Lemma.

\begin{lm}
\label{carl1}
Let $\{\al_L\}_{L\in\cD}$ define Carleson measure with intensity $B$ related to dyadic lattice $\cD$ on metric space $X$.  Let $F$ be a positive function on $X$. Then
\begin{equation}
\label{1}
\sum_L (\inf_L F)\, \al_L \le 2 B\int_{X} F\,d\mu\,.
\end{equation}
\begin{equation}
\label{2}
\sum_L\frac{ \inf_L F}{\La\sigma\Ra_{\mu, L}} \al_L \le C\,B\int_{X}\frac{F}{\sigma} d\mu\,.
\end{equation}
\end{lm}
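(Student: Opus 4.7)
The plan for both parts is a distribution-function (layer-cake) argument combined with the Carleson packing of dyadic level sets.

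For \eqref{1}, I would write $\inf_L F = \int_0^\infty \mathbf{1}_{\{t < \inf_L F\}}\,dt$ and swap the $L$-sum with the $t$-integral. The cubes with $\inf_L F > t$ are exactly those contained in the level set $E_t := \{F > t\}$, and this set decomposes as a disjoint union of maximal cubes $\{L_j(t)\} \subset \cD$. The Carleson hypothesis gives $\sum_{L \subseteq L_j(t)} \alpha_L \le B\mu(L_j(t))$; summing over $j$ yields $\sum_{L \subseteq E_t} \alpha_L \le B\mu(E_t)$, and integrating in $t$ gives $B \int_X F\,d\mu$. (The $2$ in the statement is slack absorbing strict-versus-weak inequality technicalities, exactly as in Chebyshev-type layer cakes.)

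For \eqref{2}, the first step is a pointwise reduction. From the trivial bound $(\inf_L F)\mu(L) \le \int_L F\,d\mu$ I get
\[
\frac{\inf_L F}{\langle\sigma\rangle_{\mu,L}} \;\le\; \frac{\int_L F\,d\mu}{\sigma(L)} \;=\; \frac{1}{\sigma(L)}\int_L (F/\sigma)\,\sigma\,d\mu \;=\; \langle F/\sigma\rangle_{\sigma\,d\mu,\,L},
\]
so the LHS of \eqref{2} is bounded by $\sum_L \langle F/\sigma\rangle_{\sigma,L}\,\alpha_L$. Re-running the layer-cake argument, cubes with $\langle F/\sigma\rangle_{\sigma,L} > t$ are contained in the level set of the weighted dyadic maximal function $M^d_\sigma(F/\sigma)$ associated to the measure $\sigma\,d\mu$; the same Carleson packing then yields
\[
\sum_L \frac{\inf_L F}{\langle\sigma\rangle_{\mu,L}}\,\alpha_L \;\le\; B \int_X M^d_\sigma(F/\sigma)\,d\mu.
\]

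The main obstacle is the last step: bounding $\int_X M^d_\sigma(F/\sigma)\,d\mu$ by a constant multiple of $\int_X F/\sigma\,d\mu$. This is not immediate from standard maximal-function theory, since $M^d_\sigma$ is not bounded from $L^1(\mu)$ to $L^1(\mu)$ for general $\sigma$. I would handle it by a layered decomposition of the lattice: partition $\cD = \bigsqcup_k \cD_k$ with $\cD_k := \{L : \langle\sigma\rangle_{\mu,L} \in [2^k,2^{k+1})\}$, apply \eqref{1} separately on each $\cD_k$, and sum against the geometric weight $2^{-k}$. By the Lebesgue differentiation theorem, at a.e.\ point $x$ the relevant layers are those with $2^k \gtrsim \sigma(x)$, so the geometric series in $2^{-k}$ converges pointwise to $O(1/\sigma(x))$; integrating against $F$ produces the desired $\int F/\sigma\,d\mu$ on the right. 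The constant $C$ thus depends only on the doubling of $\mu$, not on any doubling properties of $w\,d\mu$ or $\sigma\,d\mu$, matching the ground rules set up in Lemma~\ref{uval}.
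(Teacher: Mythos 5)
Your layer--cake proof of \eqref{1} is correct and is the standard argument; in fact it gives the constant $B$ rather than $2B$, as you note.

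The proof of \eqref{2}, however, has a genuine gap at the very step you flag. After you replace $\inf_L F / \La\sigma\Ra_{\mu,L}$ by the $\sigma$-average $\La F/\sigma\Ra_{\sigma\,d\mu,L}$, the crucial $\inf$ structure --- which is what makes the layer cake work in \eqref{1} --- is gone, and you are left needing an $L^1(\mu)$ bound for $M^d_\sigma$, which is false. The proposed repair by slicing $\cD$ into $\cD_k=\{L:\La\sigma\Ra_{\mu,L}\in[2^k,2^{k+1})\}$ does not close the gap: applying \eqref{1} on each slice and summing $2^{-k}$ produces, at a point $x$, the geometric series $\sum_k 2^{-k}\chi_{\Omega^{(k)}}(x)$ with $\Omega^{(k)}=\bigcup_{L\in\cD_k}L$, and this is comparable to $\big(\min_{L\ni x}\La\sigma\Ra_{\mu,L}\big)^{-1}$, not to $1/\sigma(x)$. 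The Lebesgue differentiation theorem controls only the \emph{small} cubes through $x$; a \emph{large} cube $L\ni x$ can have $\La\sigma\Ra_{\mu,L}$ far below $\sigma(x)$. Concretely, on $[0,1]$ with $\mu$ Lebesgue, take $\sigma=1+M\chi_{[0,\delta]}$ with $M\delta\le 1$: for $x\in[0,\delta]$ one has $\sigma(x)\approx M$ while $\La\sigma\Ra_{\mu,[0,1]}\approx 1$, so your geometric sum is of order $1$, which is $M$ times larger than $1/\sigma(x)$. Thus the pointwise bound you need fails, and the argument does not give \eqref{2}.

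The correct route (and presumably the ``folklore'' one intended in the paper) is to use the standard $L^2$ dyadic Carleson embedding theorem rather than an $L^1$ layer cake. Observe the elementary pointwise bound
\[
\frac{\inf_L F}{\La\sigma\Ra_{\mu,L}}
\;\le\;(\inf_L F)\,\La\sigma^{-1/2}\Ra_{\mu,L}^2
\;=\;\Big(\La(\inf_L F)^{1/2}\sigma^{-1/2}\Ra_{\mu,L}\Big)^2
\;\le\;\La (F/\sigma)^{1/2}\Ra_{\mu,L}^2 ,
\]
where the first inequality is $1/\La\sigma\Ra_{\mu,L}\le\La\sigma^{-1/2}\Ra_{\mu,L}^2$ (Cauchy--Schwarz $\mu(L)^2\le(\int_L\sigma^{1/2})(\int_L\sigma^{-1/2})$ followed by Jensen $\La\sigma^{1/2}\Ra_{\mu,L}\le\La\sigma\Ra_{\mu,L}^{1/2}$), and the last uses $\inf_L F\le F$ pointwise on $L$. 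Now apply the Carleson embedding theorem to $g=(F/\sigma)^{1/2}\in L^2(\mu)$:
\[
\sum_L \frac{\inf_L F}{\La\sigma\Ra_{\mu,L}}\,\al_L
\;\le\;\sum_L \La g\Ra_{\mu,L}^2\,\al_L
\;\le\;4B\,\|g\|_{L^2(\mu)}^2
\;=\;4B\int_X \frac{F}{\sigma}\,d\mu ,
\]
giving \eqref{2} with $C=4$, a constant independent of $\sigma$ and of any doubling of $w\,d\mu$ or $\sigma\,d\mu$. (Equivalently, one may keep the layer cake and, on each maximal cube $L_0\subset\{F>t\}$, prove $\sum_{L\subseteq L_0}\al_L/\La\sigma\Ra_{\mu,L}\le 4B\,\sigma^{-1}(L_0)$ by Carleson embedding for $\sigma^{-1/2}\chi_{L_0}$.)
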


Now use \eqref{SR}. Then  the estimate of $III\le \sum_L S_L(\psi\sigma) R_L(\phi w)$ will be reduced to estimating
$$
(m+n+1)Q^{1-\al/2}\bigg(\sum_L \frac{\inf_L M_{w}(|\phi |^p )^{2/p}}{\La \sigma\Ra_{\mu, L}} \wt\mu_L\bigg)^{1/2}\le (m+n+1)^2\,Q\,\bigg(\int_{\R} (M_{w}(|\phi |^p ))^{2/p} wd\mu\bigg)^{1/2}
$$
$$
\le  (\frac1{2-p})^{1/p}(m+n+1)^2\,Q\,\bigg(\int_{\R} \phi^2 \, wd\mu\bigg)^{1/2}\le (m+n+1)^3\,Q\,\bigg(\int_{\R} \phi^2 \, wd\mu\bigg)^{1/2} \,.
$$
Here we used \eqref{2} and the usual estimates of maximal function $M_{\mu}$ in $L^q(\mu)$ when $q\approx 1$. Of course for $II$ we use the symmetric reasoning. 

\bigskip

Now $IV$: we use \eqref{RR} first.
$$
\sum_L S_L(\psi\sigma) R_L(\phi w) \le (m+n+1)Q^{1-\al}\sum_L\inf_L\, M_{w}(|\phi |^p )^{1/p}\inf_L\, M_{\sigma}(|\psi |^p )^{1/p}\wt\mu_L
$$
$$
\le C (m+n+1)^2Q\int_{\R}(M_{w}(|\phi |^p ))^{1/p}\,(M_{\sigma}(|\psi |^p ))^{1/p}w^{1/2}\sigma^{1/2}d\mu
$$
$$
\le C (m+n+1)^2Q\,\bigg(\int_{\R}(M_{w}(|\phi |^p ))^{2/p}\, wd\mu\bigg)^{1/2}\,\bigg(\int_{\R}(M_{\sigma}(|\psi |^p ))^{2/p}\, \sigma d\mu\bigg)^{1/2}
$$
$$
\le C (m+n+1)^4\,Q\,\bigg(\int_{\R}\phi^2\, wd\mu\bigg)^{1/2}\bigg(\int_{\R}\psi^2\, \sigma d\mu\bigg)^{1/2}\,.
$$
Here we used \eqref{1} and the usual estimates of maximal function $M_{\mu}$ in $L^{2/p}(\mu)$ when $p\approx 2,\, p<2$.

\end{document}